\definecolor{webgreen}{rgb}{0,.5,0}
\definecolor{webbrown}{rgb}{.6,0,0}
\newcommand{\Spec}{\operatorname{Spec}}
\renewcommand{\epsilon}{\varepsilon}
\renewcommand{\phi}{\varphi}
\theoremstyle{plain}
\newtheorem{Theorem}{Theorem}
\newtheorem{Lemma}[Theorem]{Lemma}
\theoremstyle{definition}
\newtheorem{Definition}[Theorem]{Definition}
\newtheorem{Question}[Theorem]{Question}
\newtheorem{Example}[Theorem]{Example}
\theoremstyle{Remark}
\begin{document}

\begin{center}
\vskip 1cm{\LARGE\bf Geometrically Nilpotent Subvarieties}
\vskip 1cm
\large
Alexander Borisov  \\
Department of Mathematics \\
Binghamton University\\ 
4400 Vestal Parkway Eastl\\
Binghamton, New York 13902-6000\\
U.S.A.\\
\href{mailto:borisov@pitt.edu}{\tt borisov@math.binghamton.edu} \\
\ \\

\end{center}

\vskip .2 in

\begin{abstract} We construct some examples of polynomial maps over finite fields that admit subvarieties with a peculiar property: every geometric point is mapped to a fixed point by some iteration of the map, while the whole subvariety is not. Several related open questions are stated and discussed. 
\end{abstract}

\section{Introduction}

Suppose \(X\) is a variety defined over a finite field \(F\), and \(T:X\to X\) is a regular map, defined over \(F.\) Then for every finite extension \(K\) of \(F\) the map \(T\) induces a map \(T_K\) on the set of geometric points of \(X\) over \(K\). These maps commute with the action of the Galois group of \(K\) over \(F\) and the iterations of these maps are induced by the iterations of \(T\).

Of special interest is the case when \(X\) is the affine space. Here \(T\) is a polynomial map. It is given by \(n\) polynomials in \(n\) variables defined over \(F\). One particular source of these maps is reduction modulo \(p\) of integer polynomial maps. Such maps have been studied extensively in special cases, often in connection with factorization algorithms and cryptology. Generally speaking, this is an area of many questions and few complete answers \cite{BGHKST, ThreeMaps, VasigaShallit}.

In particular, it is interesting to ask when \(T_K\) is nilpotent (i.e. some power of it sends the entire set \(K^n\) to a single point. The following theorem is an almost immediate corollary of the density theorem of Borisov and Sapir \cite{BS}.

\begin{Theorem} Suppose \(T\) is a polynomial map defined over a finite field \(F.\) The \(T_K\) is nilpotent for all \(K\) if and only if \(T\) is nilpotent.
\end{Theorem}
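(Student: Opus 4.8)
The plan is to deduce the nontrivial implication directly from the density theorem, organizing the argument around the \emph{eventual image} of $T$. The reverse implication is immediate: if $T$ is nilpotent, then some $T^m$ is a constant map $\mathbb{A}^n\to\{a\}$ with $a\in F^n$, whence $T_K^m$ collapses $K^n$ to $\{a\}$ for every finite extension $K$, so each $T_K$ is nilpotent.

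For the forward implication I would first record the local consequence of nilpotency. If $T_K^m(K^n)=\{a\}$, then $a$ is a fixed point and $T_K^j\equiv a$ for every $j\ge m$; hence any periodic point $P$ of $T_K$, with $T_K^k(P)=P$, satisfies $P=T_K^j(P)=a$ for a multiple $j$ of $k$ with $j\ge m$. Thus a nilpotent $T_K$ has \emph{exactly one} periodic point. Consequently, if $T_K$ is nilpotent for all $K$, then $T$ has a unique periodic point over $\overline F$: two distinct geometric periodic points $P_1\ne P_2$ would both be defined over some common finite extension $K$, giving $T_K$ at least two periodic points and contradicting nilpotency.

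Next I would introduce the descending chain of Zariski closures $\overline{T^m(\mathbb{A}^n)}$, which stabilizes to the eventual image $Y$, on which $T$ restricts to a \emph{dominant} self-map, and observe that $T$ is nilpotent precisely when $Y$ is a single point. After replacing $T$ by a power $T^s$ that fixes each irreducible component of $Y$, the density theorem of Borisov and Sapir guarantees that the periodic points of the dominant self-map $T^s$ of such a component are Zariski dense in it. If $Y$ were not a single point, it would either contain a positive-dimensional component, yielding infinitely many periodic points of $T^s$ (hence of $T$) by density, or consist of at least two points, which are themselves distinct periodic points; either way this contradicts the uniqueness established above. Therefore $Y$ is a single point and $T$ is nilpotent.

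The genuine content is carried entirely by the cited density theorem, so the thing to get right is the bookkeeping that lets me apply it. Since $T$ need not be dominant on $\mathbb{A}^n$, I must pass to the eventual image $Y$ and to a power $T^s$ stabilizing a component, so that the map to which density is applied is an honest dominant self-map of an irreducible variety. The only other delicate point is the translation between the uniform hypothesis (nilpotency over each finite $K$) and the geometric statement over $\overline F$, which I handle by choosing $K$ large enough to realize any prescribed finite set of periodic points.
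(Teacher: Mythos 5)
Your proposal is correct and takes essentially the same route as the paper: pass to the eventual image $Y$ of $T$, apply the Borisov--Sapir density theorem to the dominant restriction, produce two distinct periodic points, and note that $T_K$ cannot be nilpotent over any finite extension $K$ containing their fields of definition. If anything, your write-up is slightly more careful than the paper's one-paragraph argument, since you explicitly handle the zero-dimensional case where $Y$ consists of several points and pass to a power $T^s$ fixing each irreducible component before invoking density.
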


\begin{proof} The ``if'' part is obvious. For the converse, suppose \(V\) is the Zariski closure of \(T^{(n)}(A^n)\), where \(T^{(n)}\) is the \(n\)-th iteration of \(T\). Then \(V\) is invariant under \(T\), the restriction of \(T\) to \(V\) is dominant, and periodic geometric points of \(T\) (more precisely, quasi-fixed points) are dense in \(V\) \cite{BS}. Suppose \(\dim V \geq 1\). Take any two distinct periodic points on \(V\). Then for any \(K\) that contains both of their fields of definition, the map \(T_K\) is not nilpotent.
\end{proof}

In contrast to the above mentioned density theorem, Borisov constructed a simple two-variable polynomial map \(T\) over integers, such that all of its reductions modulo rimes \(p\) are dominant, but the maps \(T_{\mathbb F_{p}}\) are  nilpotent  \cite{ThreeMaps}. Of course, this does not contradict the density theorem: the maps \(T_K\) have many periodic points for extensions \(K\) of \({\mathbb F_{p}}\).

The following two definitions are crucial for this paper.

\begin{Definition}  Suppose \(T:X\to X\) is a regular map over a finite field \(F\), and \(Y\) is a subvariety of \(X\). Then \(Y\) is a nilpotent subvariety of \(X\) with  respect to \(T\) if for some point \(P\) of \(X\), fixed by \(T\), and some integer \(k \geq 1,\) \(T^{(k)}(Y)=\{P\}.\)
\end{Definition}

\begin{Definition} Suppose \(T:X\to X\) is a regular map over a finite field \(F\), and \(Y\) is a subvariety of \(X\). Then \(Y\) is a geometrically nilpotent subvariety of \(X\) with  respect to \(T\) if for some point \(P\) of \(X\), fixed by \(T\), all geometric points of \(Y\) are mapped to \(P\) by a sufficiently high iteration of \(T.\)
\end{Definition}

 It is important to note that in both of the above definitions  \(Y\) is not required to be \(T\)-invariant.  Clearly, if \(Y\) is nilpotent, then it  is geometrically nilpotent. The main goal of this paper is to show that the converse statement is false. In fact, we give two different constructions, that provide infinitely many counterexamples.  The first one combines  the trap construction of Borisov \cite{ThreeMaps} with the Pollard's \(\rho\) algorithm \cite{Pollard}. The second one replaces the Pollard 's \(\rho\) algorithm by the periodicity of the Fibonacci sequence in finite abelian groups.   The existence of these examples opens several interesting questions that we discuss in the last section.

\section{Examples and Theorems}

\begin{Example} \label{e1} Suppose \(F\) is a finite field, and \(a\in F\). Define the map \(T\) from the \(X=A^{3}(F)\) to itself as follows:
\[T(x,y,z)=\left((x^2+az^2)(x-y)z^3,\ ((y^2+az^2)^2+az^4)(x-y)z,\ (x-y)z^5\right).\]
Define \(Y\subset X\) by the equation \(x^2+az^2=yz\).
\end{Example}

Note that in the above example \(T(0,0,0)=(0,0,0)\).
\begin{Theorem} For every geometric point \(P\) of \(Y,\) defined over a finite extension \(K\) of \(F,\) there exists \(k\geq 1\) such that \(T_K^{(k)}(P)=(0,0,0).\) However no single \(k\) works for all geometric points of \(Y.\)
\end{Theorem}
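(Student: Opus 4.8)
The plan is to dehomogenize and recognize the dynamics on $Y$ as the tortoise-and-hare form of Pollard's $\rho$ algorithm. First I would observe that $T$ is homogeneous of degree $6$ and that the factor $(x-y)$ divides every coordinate; hence $T(x,y,z)=(0,0,0)$ whenever $x=y$ or $z=0$. This is the ``trap'': once the first two affine coordinates of an orbit point agree, the next iterate collapses to the fixed point $(0,0,0)$. Writing $f(w)=w^2+a$, I would then pass to the chart $z\neq 0$: setting $u=x/z$, $v=y/z$, a direct computation shows that as long as $z_k\neq 0$ and $u_k\neq v_k$, one step of $T$ sends $(u_k,v_k)$ to $(f(u_k),\,f^{(2)}(v_k))$. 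Thus $u$ runs the iteration $w\mapsto f(w)$ at single speed and $v$ at double speed. The defining equation of $Y$ reads $u_0^2+a=v_0$ in this chart, i.e.\ $v_0=f(u_0)$, so along an orbit $u_k=f^{(k)}(u_0)$ and $v_k=f^{(2k+1)}(u_0)$.

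For the first assertion I would dispose of the degenerate points first: on $Y$, $z_0=0$ forces $x_0=0$, and in either of the cases $z_0=0$ or $x_0=y_0$ the point is sent to $(0,0,0)$ in one step. In the remaining case $z_0\neq 0$, $x_0\neq y_0$, an induction using $z_{k+1}=(x_k-y_k)z_k^5$ shows that $z_k\neq 0$ and the chart description remains valid up to the first index $k$ with $u_k=v_k$. Since $u_0$ lies in the finite field $K$ and $f$ maps $K$ to itself, the sequence $u_0,f(u_0),f^{(2)}(u_0),\dots$ is eventually periodic, with some tail length $\mu$ and period $\lambda$; the collision $u_k=v_k$, i.e.\ $f^{(k)}(u_0)=f^{(2k+1)}(u_0)$, then holds for any $k\geq\mu$ with $k\equiv -1\pmod\lambda$, and such $k$ exists. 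At the first such collision the next iterate of $T$ is $(0,0,0)$, which is fixed. Hence every geometric point of $Y$ reaches $(0,0,0)$.

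For the second assertion I would produce, for each bound, a point whose collision is late. Take $u_0$ to be a point of exact period $\lambda$ for $f$ (so $\mu=0$) with $\lambda\geq 2$, and the corresponding point $P=(u_0,f(u_0),1)\in Y$. For a purely periodic orbit $f^{(i)}(u_0)=f^{(j)}(u_0)$ holds exactly when $i\equiv j\pmod\lambda$, so $u_k=v_k$ first occurs at $k=\lambda-1$; consequently $T_K^{(\lambda-1)}(P)\neq(0,0,0)$ while $T_K^{(\lambda)}(P)=(0,0,0)$. Thus no single $k$ can work for all geometric points, provided $f$ admits periodic points of arbitrarily large period.

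The main obstacle is precisely this last input: showing that $f(w)=w^2+a$ has periodic points of unbounded minimal period over $\overline{F}$. I would prove it by counting: the fixed points of $f^{(n)}$ are the roots of $f^{(n)}(w)-w$, a polynomial of degree $2^n$, and after checking separability one gets $2^n$ distinct periodic points of period dividing $n$; Möbius inversion over the divisors then forces points of exact period $n$ to exist for all large $n$ (for instance for all large primes $n$, where a point is either fixed or of period exactly $n$). Separability is automatic in characteristic $2$, where $f^{(n)}(w)-w=w^{2^n}-w-c_n$ is an additive polynomial with nonzero derivative; in odd characteristic I would rule out a uniform bound on periods by the same divisor count, taking $n$ in a set where $f^{(n)}(w)-w$ has enough simple roots. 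The only other point demanding care is the bookkeeping in the main case --- verifying that the orbit stays in the chart $z\neq 0$ until the collision, and that, for the periodic points used in the second part, no earlier accidental collision occurs --- but this is controlled entirely by the recursion for $z_k$ and by the description of equalities in an eventually periodic orbit.
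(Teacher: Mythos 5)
Your first half is correct and is essentially the paper's own argument: the observation that every coordinate of \(T\) is divisible by \((x-y)z\) (the trap), the passage to \(u=x/z\), \(v=y/z\) on the chart \(z\neq 0\), the identification of the induced dynamics on \(Y\) with the tortoise-and-hare iteration \((u,v)\mapsto(f(u),f^{(2)}(v))\), \(v_0=f(u_0)\), and the collision argument from eventual periodicity over a finite field are exactly what the paper does; your \(\mu\)/\(\lambda\) bookkeeping is just a more explicit form of the paper's appeal to Pollard.

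The gap is in the second half. You reduce ``no single \(k\) works'' to the claim that \(f(w)=w^2+a\) has periodic points of unbounded exact period over \(\overline{F}\), and your proposed proof of that claim breaks in odd characteristic. The polynomial \(f^{(n)}(w)-w\) is genuinely inseparable in general: over \(\mathbb{F}_7\) with \(a=0\) one has \(f^{(3)}(w)-w=w^8-w=w(w-1)^7\), so instead of \(2^3\) distinct roots there are only two, both of them fixed points of \(f\). Since every nonzero multiplier in \(\overline{\mathbb{F}}_p^{\,*}\) is a root of unity, multiple roots of \(f^{(n)}(w)-w\) are unavoidable for suitable \(n\), and your fallback for odd characteristic --- ``taking \(n\) in a set where \(f^{(n)}(w)-w\) has enough simple roots'' --- is precisely the unproved point: nothing in your sketch controls the multiplicities. (The claim itself is true, and for large primes \(n\neq p\), exceeding the orders of the fixed-point multipliers, it can be rescued by showing each of the at most two fixed points occurs in \(f^{(n)}(w)-w\) with multiplicity at most \(2\), so fixed points cannot exhaust the degree \(2^n\); but that multiplicity bound, not separability, is the crux, and it is absent from your proposal.)

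Moreover, this entire detour is unnecessary, and the paper's second half is one line: if \(T^{(k)}\) sent every geometric point \((t,f(t),1)\) of \(Y\) to the origin, then every \(t\in\overline{F}\) would satisfy \(f^{(j)}(t)=f^{(2j+1)}(t)\) for some \(j<k\); but \(f^{(j)}\) and \(f^{(2j+1)}\) are distinct polynomials (degrees \(2^j\) versus \(2^{2j+1}\)), so each of these finitely many equations has finitely many roots, which cannot cover the infinitely many geometric points of \(Y\). No periodic points --- let alone periodic points of large period --- are needed.
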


\begin{proof} Suppose \(P=(x_0,y_0,z_0),\) and \(T_K^{(k)}(P)=(x_k,y_k,z_k)\) for all \(k\geq 1\). If \(z_0=0,\) then \(x_1=y_1=z_1=0\). If \(z_0\neq 0,\) note that the polynomials defining \(T\) are homogeneous of the same degree. Whenever \(z\neq 0\), define \(u=\frac{x}{z}\) and \(v=\frac{y}{z}\), and, in particular \(u_k=\frac{x_k}{z_k}, v_k=\frac{y_k}{z_k}\). Note that, whenever defined, \((u_{k+1},v_{k+1})=g(u_k,v_k),\) where
\[g(u,v)=(u^2+a, (v^2+a)^2+a)\]
This polynomial map can be expressed as \((u,v)\mapsto (h(u),h(h(v)))\) where \(h(t)=t^2+a\). Note also that for any point in \(Y\) we have \(v_0=h(u_0)\). Because \(K\) is finite, iterations of \(h\) on the points of \(K\) starting from \(u_0,\) must end up in a cycle (possibly of length one). Since \(v_k\) ``goes twice as fast" as \(u_k\), for some \(k\) we will have \(u_k=v_k\) \cite{Pollard}. For this value of \(k\) we will have \(x_k=y_k.\) Therefore \(z_{k+1}=0,\) thus \(x_{k+2}=y_{k+2}=z_{k+2}=0.\)

Note that different iteration powers of of \(h(t)\) have different degrees and thus are different. So no single \(k\) works for all geometric points of \(Y.\)
\end{proof}

The above theorem implies that \(Y\) is geometrically nilpotent but not nilpotent. Note that the construction is quite universal. Instead of \(h(t)=t^2+a,\)  one can take almost any polynomial. The choice of  \(h(t)=t^2+a\) was made partially to emphasize that almost any \(h(t)\) works and partially because its dynamics for \(a=0\) and \(a=-2\) has been extensively studied before \cite{VasigaShallit}. Note also that if we take \(a\in \mathbb Z,\) the same formulas can be interpreted as an integer polynomial map, and a subscheme \(Y\), that has geometrically nilpotent non-nilpotent reductions modulo every prime \(p\).

If instead of the polynomial \(f(t)=t^2+a\) we take \(f(t)=t+1,\) we get another interesting map.

\begin{Example} \label{e2} Define an integer polynomial map \(T\) by the formula 
\[T(x,y,z)=\left( (x+z)(x-y)z,\ (y+2z)(x-y)z,\ (x-y)z^3  \right).\]
Define a subscheme \(Y\) by an equation \(x+z=y.\)
\end{Example} 

\begin{Theorem} For every prime \(p\) the above map \(T\) the reduction of \(T^{(p+1)}\) modulo \(p\) contracts the reduction modulo \(p\) of \(Y\) to the point \((0,0,0).\)
\end{Theorem}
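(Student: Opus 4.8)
The plan is to imitate the proof of the previous theorem, with the Pollard \(\rho\) collision replaced by the (much more rigid) periodicity of the map \(h(t)=t+1\) on the finite additive group \(\mathbb F_p\). After reducing everything modulo a fixed prime \(p\), I would first dispose of the locus \(z=0\): every coordinate of \(T\) is divisible by \(z\), so any geometric point \(P=(x_0,y_0,z_0)\) with \(z_0=0\) has \(T(P)=(0,0,0)\) and is contracted at once. On the complementary chart \(z\neq 0\) I would dehomogenize, setting \(u=\frac{x}{z}\) and \(v=\frac{y}{z}\), and check that after cancelling the common factor \((x-y)z\) the induced map on the \((u,v)\)-chart is \(g(u,v)=(u+1,\,v+2)\); equivalently \(u\mapsto h(u)\) and \(v\mapsto h(h(v))\) with \(h(t)=t+1\), exactly the pattern of the previous theorem but with \(f(t)=t+1\) in place of \(t^2+a\). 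The defining equation \(y=x+z\) of \(Y\) becomes \(v_0=u_0+1=h(u_0)\), so on \(Y\) the coordinate \(v\) again starts one step ahead of \(u\).

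The heart of the matter is the collision \(u_k=v_k\). Iterating the chart map gives \(u_k=u_0+k\) and \(v_k=u_0+1+2k\), so \(u_k=v_k\) exactly when \(k\equiv-1\pmod p\). Here the role played by Pollard's \(\rho\) algorithm before is taken over by the fact that the orbit of \(h\) is a single cycle of length \(p\) in \(\mathbb F_p\): the ``tortoise'' \(u_k\) and the ``hare'' \(v_k\), moving at speeds \(1\) and \(2\) along this cycle, are forced to coincide, and --- in contrast with the quadratic case --- they do so at a time \(k=p-1\) that is \emph{uniform} over all geometric points, independent of \(u_0\) and of the field of definition. This uniformity is precisely what turns the pointwise (``geometric'') contraction into a single contraction exponent modulo \(p\).

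Granting the collision, the conclusion is immediate: at \(k=p-1\) we have \(u_{p-1}=v_{p-1}\), hence \(x_{p-1}=y_{p-1}\), so the factor \((x-y)\) forces \(T(P_{p-1})=(0,0,0)\); thus \(T^{(p)}(P)=(0,0,0)\), and a fortiori \(T^{(p+1)}(P)=(0,0,0)\). The step I expect to demand the most care is verifying that \(z_k\neq 0\) for \(0\le k\le p-1\), so that the dehomogenization and the linear formulas for \(u_k,v_k\) stay valid right up to the collision and no point of \(Y\) degenerates or enters a spurious cycle earlier. Since each \(z_{k+1}\) carries the factor \((x_k-y_k)=(u_k-v_k)z_k\) and \(u_k-v_k=-1-k\) first vanishes only at \(k=p-1\), an induction from \(z_0\neq 0\) gives \(z_1,\dots,z_{p-1}\neq 0\); any point at which some \(z_k\) vanishes sooner is contracted even faster. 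Combining this with the \(z_0=0\) case yields the uniform bound \(p+1\) for every geometric point, hence for the whole reduction of \(Y\).
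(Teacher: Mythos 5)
Your proof is correct and takes essentially the same route as the paper's: the same dehomogenization \(u_k=\frac{x_k}{z_k}\), \(v_k=\frac{y_k}{z_k}\), the same observation that \(v_k-u_k=k+1\) forces the collision \(u_{p-1}=v_{p-1}\), hence \(x_{p-1}=y_{p-1}\) and contraction to \((0,0,0)\) within two more steps. Your explicit treatment of the locus \(z=0\) and the inductive check that \(z_k\neq 0\) up to the collision time merely make precise bookkeeping that the paper's proof leaves implicit.
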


\begin{proof}  Fix a priime \(p\). Slightly abusing the notation for the sake of brevity, we will denote by \(T\) and \(Y\) their reductions modulo \(p\). As in the previous theorem, we define \(u_k=\frac{x_k}{z_k}\) and \(v_k=\frac{y_k}{z_k}\). Then for all geometric  points \((x_0,y_0,z_0)\)  in \(Y\) we have 
\[v_0=u_0+1,\ v_1=u_1+2\ ,...\ , v_{p-1}=u_{p-1}+p.\]
So \(v_{p-1}=u_{p-1},\) thus  \(y_{p-1}=x_{p-1}.\) Therefore, \(z_p=0\) and \(x_{p+1}=y_{p+1}=z_{p+1}=0.\)
\end{proof}

From the proof, it is clear that no smaller power of the reduction of \(T\) contracts the entire \(Y\) to a point. In fact, \(T^{(k)}(Y)\) is a two-dimensional subscheme of \((\Spec \mathbb  Z)^3\) for every \(k\geq 1\).

The above construction is by no means the only trick that one can play to construct geometrically nilpotent non-nilpotent subvarieties. The following example uses the periodic nature of the Fibonacci sequence in finite abelian groups instead of the Pollard's \(\rho\) algorithm.

\begin{Example}  Define an integer polynomial map \(T\) by the formula 
\[T(x,y,z)=\left( y(x-1)z^2,\ xy(x-1)z,\ (x-1)z^3  \right).\]
Define a subscheme \(Y\) by the equation \(x=y.\)
\end{Example}

\begin{Theorem} For every prime \(p\) the reduction of \(Y\) modulo \(p\) is geometrically nilpotent but not nilpotent for the reduction of \(T\) modulo \(p\).
\end{Theorem}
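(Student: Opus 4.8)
The plan is to follow the template of the two preceding proofs: restrict to the open locus $z\neq 0$, introduce the ratios $u=x/z$ and $v=y/z$, read off the induced recursion, and then use a finiteness principle — here the periodicity of the Fibonacci sequence — to force the trap factor to vanish. First I would dispose of the sublocus $z_0=0$: every coordinate of $T$ is divisible by $z$, so any geometric point of $Y$ with $z_0=0$ is sent to $(0,0,0)$ in a single step. For the remaining points I would use the homogeneous factorization $T=(x-z)z\cdot S$ with $S(x,y,z)=(yz,\,xy,\,z^2)$; the common scalar $(x-z)z$ cancels in the ratios, giving $u_{k+1}=v_k$ and $v_{k+1}=u_kv_k$. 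Thus $u$ obeys the multiplicative Fibonacci recursion $u_{k+2}=u_ku_{k+1}$, and on $Y$ the equation $x=y$ forces $u_0=v_0$, i.e. $u_0=u_1$, so that $u_k=u_0^{\,F_{k+1}}$ for every $k$ for which the orbit stays in $\{z\neq 0\}$.

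The heart of the argument is then a divisibility statement. The trap factor $x_k-z_k$ vanishes exactly when $u_k=1$, and as soon as $u_k=1$ we get $z_{k+1}=0$, hence $T^{(k+2)}$ collapses the point to $(0,0,0)$. So for a geometric point defined over a finite field $K$ I must produce an index $k\geq 1$ with $u_0^{\,F_{k+1}}=1$. Writing $d=\mathrm{ord}(u_0)$ for the order of $u_0$ in $K^{*}$, this is the requirement $d\mid F_n$ for some $n=k+1\geq 2$, i.e. the existence of the rank of apparition of $d$. I would establish it by the standard argument: the pair $(F_n,F_{n+1})\bmod d$ traces a purely periodic orbit in $(\mathbb Z/d)^2$, since the recursion is invertible via $F_{n-1}=F_{n+1}-F_n$, so the orbit returns to $(F_0,F_1)=(0,1)$ and $d\mid F_n$ whenever $n$ is a positive multiple of the period. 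Taking $n=\alpha(d)$ minimal guarantees $u_j\neq 1$, hence $z_j\neq 0$, for $0<j<k$, so the closed form $u_k=u_0^{\,F_{k+1}}$ is legitimate all the way to the triggering step. This is the step carrying the real content, and it is the precise analogue of the Pollard collision in the theorem after Example~\ref{e1} and of the telescoping $v_k=u_k+(k+1)$ in the theorem after Example~\ref{e2}.

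The delicate point, and the one I expect to be the genuine obstacle, is the degenerate line $x=y=0$ inside $Y$, where $u_0=v_0=0$. There the multiplicative recursion is absorbing, $u_k=0$ for all $k$, so the trigger $u_k=1$ never fires; indeed a direct computation shows $(0,0,c)$ is carried to $(0,0,-c^{m})$ and never reaches the origin for $c\neq 0$. Controlling this locus is exactly what pins down the precise shape of the map: one wants the trap factor to vanish on $\{x=0\}$ as well, so that these points are swept into the origin while the generic points of $Y$ remain untriggered until the Fibonacci mechanism acts (for instance by arranging the additive realization $(u,v)\mapsto(v,u+v)$ with trigger $u_k=0$, for which the line is absorbed for free). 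I would therefore handle $\{x=y=0\}$ by a separate computation dictated by the chosen normalization of $T$, and regard reconciling it with the generic argument as the crux of the proof.

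Finally, for the non-nilpotence I would argue as in the remark following Example~\ref{e2} that $T^{(k)}(Y)$ is positive-dimensional for every $k$, so no single iterate collapses all of $Y$. Quantitatively, the triggering index is governed by $\alpha(\mathrm{ord}(u_0))$, and since $F_{\alpha(d)}\geq d$ we have $\alpha(d)\to\infty$ as $d\to\infty$; because $\overline{\mathbb F_p}^{\,*}$ contains elements of arbitrarily large order, for every fixed $k$ there is a point $(s,s,1)\in Y$ with $s$ of large order that $T^{(k)}$ does not send to $(0,0,0)$. Hence $Y$ is geometrically nilpotent but not nilpotent, as claimed.
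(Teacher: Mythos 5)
Your proposal is, in its positive part, the same proof as the paper's: pass to $u=x/z$, $v=y/z$, note that on $Y$ the ratios follow the multiplicative Fibonacci recursion with $u_0=v_0$, so $u_k=u_0^{F_{k+1}}$, find $k$ with $u_k=1$ by a periodicity argument in a finite group (your rank-of-apparition argument via invertibility of $(a,b)\mapsto(b,a+b)$ is exactly the paper's Lemma), and deduce non-nilpotence from the growth of the Fibonacci numbers, just as the paper does using generators of $K^*$ of large order.

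The point you flag as ``the genuine obstacle'' is in fact a genuine error in the paper, and it cannot be argued away for the map as printed. Two separate things go wrong. First, the printed trap factor is $(x-1)$, not the $(x-z)$ you silently substituted; with $(x-1)$ the Fibonacci trigger $u_k=1$ (i.e.\ $x_k=z_k$) does not force $z_{k+1}=0$, and indeed over $\mathbb{F}_5$ the point $(2,2,2)\in Y$ cycles $(2,2,2)\to(3,3,3)\to(4,4,4)\to(2,2,2)$ forever, with $u_k\equiv 1$ throughout. Your substitution is the reading that makes the ratio argument coherent (and makes $T$ homogeneous, like Example~\ref{e1}). Second --- and this survives your correction --- the locus $x=y=0$ inside $Y$ is never captured: $(0,0,c)\mapsto(0,0,-c^4)$ for the corrected map (and $(0,0,c)\mapsto(0,0,-c^3)$ for the printed one), so $(0,0,1)\in Y(\mathbb{F}_p)$ is periodic of period at most $2$ for every prime $p$ and never reaches the origin. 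The paper's proof overlooks this because its Lemma is applied inside the group $K^*$, which tacitly assumes $u_0\neq 0$. So the theorem is false for $T$ and $Y$ exactly as printed; the repair is the one your discussion points to: enlarge the trap so that it also vanishes on $\{x=0\}$, e.g.\ take the common factor $x(x-z)$, i.e.\ $T(x,y,z)=\left(xy(x-z)z^2,\ x^2y(x-z)z,\ x(x-z)z^3\right)$. Then points of $Y$ with $x_0=0$ or $z_0=0$ reach the origin in one step, $u_k=u_0^{F_{k+1}}$ never vanishes when $u_0\neq 0$, and your argument (which is the paper's) goes through verbatim. In short: there is no gap in your reasoning relative to the paper --- the gap you isolated, and declined to paper over, is the paper's own.
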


\begin{proof} Fix a priime \(p\). Slightly abusing the notation for the sake of brevity, we will denote by \(T\) and \(Y\) their reductions modulo \(p\). As before, we denote  \(u_k=\frac{x_k}{z_k}\) and \(v_k=\frac{y_k}{z_k}\). Then for all geometric  points \((x_0,y_0,z_0)\)  in \(Y\) we have, as long as \(z_k\neq 0,\)  \((u_{k+1},v_{k+1})=(v_k,u_kv_k)\). We also know that \(u_0=v_0,\) for all points in \(Y.\) This means that \((u_k,v_k)=(a_k,a_{k+1}),\) where \(\{a_k\}\) satisfies Fibonacci-like recursion in the multiplicative group of the field \(K.\) The following lemma is well known, and is included here for the convenience of the reader, and lack of a canonical reference. Note that in the lemma we write operation as addition, but we will apply it in the multiplicative notation.

\begin{Lemma} Suppose \(A\) is finite abelian group, and \(\{a_k\}_{k=0}^{\infty}\) is a sequence of elements of \(A\) so that
\[a_0=a_1; \ \ a_{k+2}=a_{k+1}+a_k\]
Then  \(a_m=0\) for some natural \(m\).
\end{Lemma}
\begin{proof} (of Lemma) The number of pairs \((a,b)\) from \(A^2\) is finite, and the map \(H:A\to A\) defined by \(H(a,b)=(b,a+b)\) is a bijection. As a permutation of \(A^2,\) it is a product of commuting cycles. Note that \((a_0,a_0)=H(0,a_0)\), so \(H^{(m)}(a_0,a_0)=(0,a_0),\) where \(m+1\) is the length of the cycle containing \((0,a_0).\)
\end{proof}

The multiplicative group \(K^*\) of every finite  field \(K\) is cyclic of order \(|K|-1.\) By the above lemma, for every points \((x_0,y_0,z_0)\) in \(Y(K)\) we get  \(u_k=0\) for some \(k\). Note that no \(k\) would work for all points for all \(K.\) Indeed, if the order of  \(K^*\) is greater than the \(k\)-th Fibonacci number (the classical one), and \(u_0\) is a generator of \(K^*\), then \(u_i\neq 1 \) for all \(i\leq k\). As in Example \ref{e1}, this implies that \(Y\) is geometrically nilpotent but not nilpotent.
\end{proof}

Note that the above example is also an integer polynomial map. Also, instead of the Fibonacci sequence, one can use virtually any rational automorphism.  One just has to be careful to avoid automorphisms of finite order, otherwise the variety \(Y\) will be not just geometrically nilpotent but actually nilpotent. For example, using the Fibonacci sequence additively gives an example similar to Example \ref{e2}.

\section{Open Questions}

Note that all examples in the previous section involve maps in three variables. It is very unlikely that there exists an integer polynomial map in two variables such that all of its reductions modulo primes have a geometrically nilpotent non-nilpotent subvariety. However, the following question is very intriguing.

\begin{Question} Does there exist a two-variable polynomial map over some finite field that has  a geometrically nilpotent non-nilpotent subvariety? On the one hand, it seems feasible that one can ``trade" one dimension for sticking to a single prime. But, on the other hand, the trap construction ``takes" one dimension, and the remaining dimension appears to be insufficient for our purposes. Perhaps this question can be answered negatively by some kind of point count, or one can introduce  a ``portable" version of the trap, that would not require a designated variable.
\end{Question}

\begin{Question} 
In all  of the above examples, the  point \(P\) was the same for all geometric points of \(Y\). It is not hard to construct examples with reducible \(Y\) whose different irreducible components have geometric points mapped to different points \(P\). But is  it possible to have several different fixed points \(P\) that would serve as images of high iterations of \(T\) of geometric points of a geometrically irreducible \(Y\)?  
\end{Question}

\begin{Question} Note that if \(Y\) is a geometrically nilpotent subvariety for \(T\) then so are all subvarieties of \((T^{(k)})^{-1}(T^{(m)}(Y))\) for all \(k,m \in \mathbb N\). Is it true that for any map \(T\) there exists a geometrically nilpotent subvariety \(Y\), possibly empty or reducible, so that every geometrically nilpotent subvariety for \(T\) is contained in \((T^{(k)})^{-1}(T^{(m)}(Y))\) for some \(k\) and \(m\)?
\end{Question}

%

\bigskip
\hrule
\bigskip

\noindent 2010 {\it Mathematics Subject Classification}:
Primary 37P25; Secondary 37P05, 14G15, 11G25.

\noindent \emph{Keywords: } 
polynomial map, reduction, iteration

\bigskip
\hrule
\bigskip

\end{document}